\author{Luca Martinazzi  \\ \footnotesize Rutgers University \\ \footnotesize \texttt{luca.martinazzi@math.rutgers.edu}}
\title{Conformal metrics on $\R{2m}$ with constant $Q$-curvature and large volume}
\date{January 20, 2012}
\newtheorem{trm}{Theorem}
\newtheorem{lemma}[trm]{Lemma}
\newcommand{\R}[1]{\mathbb{R}^{#1}}
\newcommand{\de}{\partial}
\newcommand{\ve}{\varepsilon}
\newenvironment{proof}{\noindent\emph{Proof.}}{\hfill$\square$\medskip}
\DeclareMathOperator{\loc}{loc}
\DeclareMathOperator{\vol}{vol}
\begin{document}
\maketitle

\begin{abstract}
We study conformal metrics $g_u=e^{2u}|dx|^2$ on $\R{2m}$ with constant $Q$-curvature $Q_{g_u}\equiv (2m-1)!$ (notice that $(2m-1)!$ is the $Q$-curvature of $S^{2m}$) and finite volume. When $m=3$ we show that there exists $V^*$ such that for any $V\in [V^*,\infty)$ there is a conformal metric $g_u=e^{2u}|dx|^2$ on $\R{6}$ with $Q_{g_u}\equiv 5!$ and $\vol(g_u)=V$. This is in sharp contrast with the four-dimensional case, treated by C-S. Lin. We also prove that when $m$ is odd and greater than $1$, there is a constant $V_m>\vol (S^{2m})$ such that for every $V\in (0,V_m]$ there is a conformal metric $g_u=e^{2u}|dx|^2$ on $\R{2m}$ with $Q_{g_u}\equiv (2m-1)!$,  $\vol(g)=V$. This extends a result of A. Chang and W-X. Chen. When $m$ is even we prove a similar result for conformal metrics of \emph{negative} $Q$-curvature.
\end{abstract}

\textsc{Keywords:} $Q$-curvature, Paneitz operators, GMJS operators, conformal geometry.

\section{Introduction and statement of the main theorems}

We consider solutions to the equation
\begin{equation}\label{eq0}
(-\Delta)^m u=(2m-1)! e^{2mu} \quad \textrm{in }\R{2m},
\end{equation}
satisfying
\begin{equation}\label{area}
V:=\int_{\R{2m}} e^{2mu(x)}dx<+\infty,
\end{equation}
with particular emphasis on the role played by $V$.

Geometrically, if $u$ solves \eqref{eq0} and \eqref{area}, then the conformal metric $g_u:=e^{2u}|dx|^2$ has $Q$-curvature $Q_{g_u}\equiv  (2m-1)!$ and volume $V$ (by $|dx|^2$ we denote the Euclidean metric). For the definition of $Q$-curvature and related remarks, we refer to Chapter 4 in \cite{cha} or to \cite{FG} and \cite{FH}.
Notice that given a solution $u$ to \eqref{eq0} and $\lambda>0$, the function $v:=u-\frac{1}{2m}\log \lambda$ solves
$$(-\Delta)^m v=\lambda (2m-1)! e^{2mv} \quad \textrm{in }\R{2m},\quad \int_{\R{2m}}e^{2mv(x)}dx=\frac V \lambda,$$
hence there is no loss of generality in the particular choice of the constant $(2m-1)!$ in \eqref{eq0}. On the other hand this constant has the advantage of being the $Q$-curvature of the round sphere $S^{2m}$. This implies that the function $u_1(x)=\log\frac{2}{1+|x|^2}$, which satisfies  $e^{2u_1}|dx|^2=(\pi^{-1})^*g_{S^{2m}}$ (here $\pi:S^{2m}\to\R{2m}$ is the stereographic projection) is a solution to \eqref{eq0}-\eqref{area} with $V=\vol(S^{2m})$. 
Translations and dilations (i.e. M\"obius transformations) actually give us a large family of solutions to \eqref{eq0}-\eqref{area} with $V=\vol(S^{2m})$, namely
\begin{equation}\label{stand}
u_{x_0,\lambda}(x):=u_1(\lambda(x-x_0))+\log\lambda =\log\frac{2\lambda}{1+\lambda^2|x-x_0|^2},\quad x_0\in \R{2m},\lambda>0.
\end{equation}
We shall call the functions $u_{x_0,\lambda}$ standard or \emph{spherical} solutions to \eqref{eq0}-\eqref{area}.

\medskip

The question whether the family of spherical solutions in \eqref{stand} exhausts the set of solutions to \eqref{eq0}-\eqref{area} has raised a lot of interest and is by now well understood. W. Chen and C. Li \cite{CL} proved that on $\R{2}$ ($m=1$) every solution to \eqref{eq0}-\eqref{area} is spherical, while for every $m>1$, i.e. in dimension $4$ and higher, it was proven by A. Chang and W-X. Chen \cite{CC} that Problem \eqref{eq0}-\eqref{area} admits solutions which are non spherical. In fact they proved 

\vspace{5mm}
\noindent{\bf Theorem A (A. Chang-W-X. Chen \cite{CC} 2001)}. {\it For every $m>1$ and $V\in (0,\vol(S^{2m}))$
there exists a solution to \eqref{eq0}-\eqref{area}.}
\vspace{5mm}

Several authors have tried to classify spherical solutions or, in other words, to give analytical and geometric conditions under which a solution to \eqref{eq0}-\eqref{area} is spherical (see \cite{CY}, \cite{WX}, \cite{Xu}), and to understand some properties of non-spherical solutions, such as their asymptotic behavior, their volume and their symmetry (see \cite{Lin}, \cite{mar1}, \cite{WY}). In particular C-S. Lin proved:

\vspace{5mm}
\noindent{\bf Theorem B (C-S. Lin \cite{Lin} 1998)}. {\it Let $u$ solve \eqref{eq0}-\eqref{area} with $m=2$. Then either $u$ is spherical (i.e. as in \eqref{stand}) or $V<\vol(S^{4}).$ }
\vspace{5mm}

Both spherical solutions and the solutions given by Theorem A are radially symmetric (i.e. of the form $u(|x-x_0|)$ for some $x_0\in \R{2m}$). On the other hand there also exist plenty of non-radial solutions to \eqref{eq0}-\eqref{area} when $m=2$.

\vspace{5mm}
\noindent{\bf Theorem C (J. Wei and D. Ye \cite{WY} 2006)}. {\it For every $V\in (0,\vol(S^{4}))$ there exist (several) non-radial solutions to \eqref{eq0}-\eqref{area} for $m=2$.}
\vspace{5mm}

\noindent\textbf{Remark D} Probably the proof of Theorem C can be extended to higher dimension $2m\ge 2$,   yielding several non-symmetric solutions to \eqref{eq0}-\eqref{area} for every $V\in (0,\vol(S^{2m}))$, but failing to produce non-symmetric solutions for $V\ge \vol(S^{2m})$. As in the proof of Theorem A, the condition $V<\vol(S^{2m})$ plays a crucial role.
\vspace{5mm}

Theorems A, B, C and Remark D strongly suggest that also in dimension $6$ and higher all non-spherical solutions to \eqref{eq0}-\eqref{area} satisfy $V<\vol(S^{2m})$, i.e.  \eqref{eq0}-\eqref{area} has no solution for $V>\vol(S^{2m})$ and the only solutions with $V=\vol(S^{2m})$ are the spherical ones. Quite surprisingly we found out that this is not at all the case. In fact in dimension $6$ we found solutions to \eqref{eq0}-\eqref{area} with arbitrarily large $V$:

\begin{trm}\label{trm0} For $m=3$ there exists $V^*>0$ such that for every $V\ge V^*$ there is a solution $u$ to \eqref{eq0}-\eqref{area}, i.e. there exists a metric on $\R{6}$ of the form $g_u=e^{2u}|dx|^2$ satisfying $Q_{g_u}\equiv 5!$ and $\vol(g_u)=V.$
\end{trm}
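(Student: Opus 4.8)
The plan is to produce the solutions as radial functions $u=u(r)$, $r=|x|$, obtained by shooting in the data of the sixth order ODE $(-\Delta)^3u=5!\,e^{6u}$, and then to show that the volume $V(u)=\int_{\R{6}}e^{6u}\,dx$, viewed as a function on the set $\mathcal{U}$ of data for which the shooting produces a genuine finite-volume solution, is continuous with unbounded range. A radial solution regular at the origin is determined by $(a_0,a_1,a_2):=(u(0),\Delta u(0),\Delta^2u(0))$, the three remaining integration constants being fixed by regularity. Since $u(x)\mapsto u(\lambda x)+\log\lambda$ preserves both the equation and the volume and acts on the data by $(a_0,a_1,a_2)\mapsto(a_0+\log\lambda,\lambda^2a_1,\lambda^4a_2)$, one parameter may be normalized away, so the essential family of volumes is two-dimensional.

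\emph{Step 1 (ODE structure).} Put $w_1=-\Delta u$ and $w_2=-\Delta w_1=\Delta^2u$. Since $-\Delta w_2=5!\,e^{6u}>0$, the function $w_2$ is superharmonic, hence (being radial and regular at $0$) nonincreasing in $r$; this is the organizing fact. One establishes: (a) global existence, the only mechanism for finite-radius blow-up being $u\to+\infty$, which is excluded for the relevant data by controlling in turn $w_2$, then $w_1$, then $u$; (b) $w_2(r)\to A:=(\Delta^2u)_\infty\in[-\infty,a_2)$, and the solution is integrable exactly in the regime $A<0$, where $u(r)\sim\tfrac{A}{384}r^4\to-\infty$ (and, on the borderline $A=0$, under an extra fast-decay condition); (c) a flux computation for $\Delta^2u$ gives the identity $A=a_2-c\int_{\R{6}}|x|^{-4}e^{6u}\,dx$ with $c>0$, so that $\{A<0\}$ is an open and non-vacuous condition. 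A handy device is the splitting $u=u_{\mathrm{hom}}+u_{\mathrm{part}}$, where $u_{\mathrm{hom}}(r)=a_0+\tfrac{a_1}{12}r^2+\tfrac{a_2}{384}r^4$ carries the data and $u_{\mathrm{part}}<0$ solves $\Delta^3u_{\mathrm{part}}=-5!\,e^{6u}$ with vanishing data, so that $u\le u_{\mathrm{hom}}$ pointwise. This step is precisely where dimension $6$ parts company with dimension $4$: in the four-dimensional equation it is $w_1=-\Delta u$ that is superharmonic, and its sign then forces $u$ itself to be superharmonic and $V\le\vol(S^4)$, which is Theorem B; in dimension $6$ the analogous chain reads $\Delta^2u\to A<0\Rightarrow-\Delta u\to+\infty\Rightarrow u\to-\infty$ like $r^4$, i.e.\ a perfectly good decaying solution, so the regime $A<0$ is admissible, and it is this regime that carries the large-volume solutions.

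\emph{Step 2 (continuity and a path with unbounded volume).} On $\mathcal{U}$, continuity of $V$ follows from continuous dependence on data together with Step 1. For the unboundedness I would construct a single continuous path $\gamma\subset\mathcal{U}$ along which the curvature $5!\,e^{6u}$ concentrates near a sphere of radius $R\to\infty$: taking $a_1=0$, $a_0$ very negative and $a_2>0$ suitably tuned, the nonlinear term is negligible while $u$ is very negative, so $u$ first climbs along $u_{\mathrm{hom}}(r)\approx a_0+\tfrac{a_2}{384}r^4$ to a maximum $u_{\max}$ near $r\approx R\sim(|a_0|/a_2)^{1/4}$, where the now-active nonlinearity turns it around; the flux identity then certifies $A<0$, i.e.\ the solution is valid and decays like $r^4$, and since $e^{6u}$ is of order $e^{6u_{\max}}$ on an annulus of width $w$ about $R$, one has $V\sim R^5\,w\,e^{6u_{\max}}$ up to a dimensional constant. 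Choosing the data so that $R^5\,w\,e^{6u_{\max}}\to\infty$ makes $V|_\gamma$ unbounded above, and since $\gamma$ is connected the intermediate value theorem gives $[V^*,\infty)\subseteq V(\gamma)$ with $V^*:=\inf_\gamma V$ (enlarged slightly if the infimum is not attained). This is the assertion.

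\emph{Main obstacle.} The crux is Step 2, and within it the quantitative control of a sixth-order, sign-indefinite ODE (no direct maximum principle for $u$) sufficient to: (i) pin down the region $\mathcal{U}$ where the solution is simultaneously global and integrable; (ii) prove the sharp lower bound, namely that $u$ really does reach the claimed plateau before $u_{\mathrm{part}}$ overtakes $u_{\mathrm{hom}}$, and that $R^5\,w\,e^{6u_{\max}}$ genuinely diverges rather than saturating; and (iii) check that $\gamma$ remains inside $\mathcal{U}$ throughout. The pointwise bound $u\le u_{\mathrm{hom}}$, the flux identity for $A$, and a careful tracking of the size of $u_{\mathrm{part}}$ relative to $u_{\mathrm{hom}}$ should be the main tools.
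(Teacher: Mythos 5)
Your proposal follows essentially the same route as the paper: radial shooting for the sixth-order ODE, the key monotonicity of $\Delta^2 u$ (superharmonicity of $w_2$), comparison with the polynomial $u_{\mathrm{hom}}$ (which is exactly the paper's $\phi$, up to the choice of which initial datum is sent to $-\infty$), volume concentrating on an annulus near the radius $R\to\infty$ where $u_{\mathrm{hom}}$ crosses zero, and continuity of the volume map plus the intermediate value theorem. The quantitative estimates you flag as the main obstacle are precisely the content of the paper's proof of Theorem \ref{trm1} (Steps 1--3), carried out there along the slice $u(0)=0$, $\Delta^2 u(0)=48$, $\Delta u(0)=6a\to-\infty$ rather than your $\Delta u(0)=0$, $u(0)\to-\infty$.
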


In order to prove Theorem \ref{trm0} we will consider only rotationally symmetric solutions to \eqref{eq0}-\eqref{area}, so that \eqref{eq0} reduces to and ODE. Precisely, given $a,b\in \R{}$ let $u=u_{a,b}(r)$ be the solution of
\begin{equation}\label{ODE}
\left\{
\begin{array}{l}
\Delta^3 u= -e^{6u}\quad \text{in }\R{6}\\
u(0)=u'(0)=u'''(0)=u'''''(0)=0 \rule{0cm}{.5cm}\\
u''(0)=\frac{\Delta u(0)}{6}=a \rule{0cm}{.5cm}\\
u''''(0)=\frac{\Delta^2u(0)}{16}=b.\rule{0cm}{.5cm}
\end{array}
\right.
\end{equation}
Here and in the following we will always (by a little abuse of notation) see a rotationally symmetric function $f$ both as a function of one variable $r\in [0,\infty)$ (when writing $f'$, $f''$, etc...) and as a function of $x\in \R{6}$ (when writing $\Delta f$, $\Delta^2 f$, etc...). We also used that 
$$\Delta f(0)=6f''(0),\quad \Delta^2f(0)=16f''''(0),$$
see e.g. \cite[Lemma 17]{mar1}. Also notice that in \eqref{ODE} we replaced $5!$ by $1$ to make the computations lighter. As we already noticed, this is not a problem.

\begin{trm}\label{trm1} Let $u=u_{a,3}$ solve \eqref{ODE} for a given $a<0$ and $b=3$.\footnote{The choice $b=3$ is convenient in the computations, but any other $b>0$ would work.} Then
\begin{equation}\label{lima}
\int_{\R{6}}e^{6u_{a,3}}dx<\infty\text{ for $-a$ large;}\quad  \lim_{a\to-\infty}\int_{\R{6}}e^{6u_{a,3}}dx=\infty.
\end{equation}
In particular the conformal metric $g_{u_{a,3}}=e^{2u_{a,3}}|dx|^2$ of constant $Q$-curvature $Q_{g_{u_{a,3}}}\equiv 1$ satisfies $$\lim_{a\to-\infty} \mathrm{vol}(g_{u_{a,3}})=\infty.$$
\end{trm}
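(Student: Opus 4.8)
\medskip
\noindent\textbf{Proof strategy.} We treat the two assertions of \eqref{lima} separately; the first is the delicate one. Write $\phi:=\Delta^2u$, $\psi:=\Delta u$. For a radial function $f$ one has $\Delta f=r^{-5}(r^5f')'$, so integrating \eqref{ODE} three times and using the initial data gives
\begin{align}
\phi(r)&=48-\int_0^r t^{-5}\int_0^t s^5e^{6u(s)}\,ds\,dt,\label{phirep}\\
\psi(r)&=6a+\int_0^r t^{-5}\int_0^t s^5\phi(s)\,ds\,dt,\qquad u(r)=\int_0^r t^{-5}\int_0^t s^5\psi(s)\,ds\,dt.\nonumber
\end{align}
Hence $\phi$ is non-increasing with $\phi(0)=16b=48$, so $\phi\le48$ everywhere; inserting this into the other two identities gives $\psi(r)\le6a+4r^2$ and then
\begin{equation}\label{uleP}
u(r)\le P(r):=\tfrac a2r^2+\tfrac18r^4\qquad(r\ge0).
\end{equation}
Since $P\le0$ on $[0,2\sqrt{-a}\,]$ and even $P(r)\le\tfrac{3a}{8}r^2$ for $r\le\sqrt{-a}$, \eqref{uleP} makes $e^{6u}$ integrably tiny near the origin: one gets $\int_0^t s^5e^{6u}\,ds=O((-a)^{-3})$ for $t\le\sqrt{-a}$, whence
\begin{equation}\label{tiny}
\int_0^{\sqrt{-a}} t^{-5}\int_0^t s^5e^{6u}\,ds\,dt=O((-a)^{-1}).
\end{equation}
The same estimates yield global existence of $u_{a,3}$, since a finite-$r$ singularity would force $u\to\pm\infty$ there and both possibilities are excluded by \eqref{phirep} and its consequences.

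\medskip
\emph{The second limit in \eqref{lima}.} First, $\phi(\infty):=\lim_{r\to\infty}\phi(r)\le0$ for every solution: if $\phi(\infty)>0$ then the last two identities force $\psi(r)\to+\infty$ and $u(r)\to+\infty$, so $\int_0^t s^5e^{6u}\,ds$ grows at least like $t^6$, the subtracted integral in \eqref{phirep} diverges, and $\phi(\infty)=-\infty$ --- absurd. Hence
$$K:=\int_0^\infty t^{-5}\int_0^t s^5e^{6u}\,ds\,dt=48-\phi(\infty)\ge48.$$
Now assume $V:=\int_{\R{6}}e^{6u_{a,3}}\,dx<\infty$ (otherwise there is nothing to prove for that $a$). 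Splitting $K$ at $r=\sqrt{-a}$ and using $\int_0^t s^5e^{6u}\,ds\le V/\vol(S^5)$ together with \eqref{tiny},
$$48\le K\le O((-a)^{-1})+\frac{V}{4\,\vol(S^5)\,(-a)^2},$$
so $V\ge c\,(-a)^2$ for $-a$ large, with $c>0$. Therefore $\lim_{a\to-\infty}\int_{\R{6}}e^{6u_{a,3}}\,dx=+\infty$, and in particular $\lim_{a\to-\infty}\vol(g_{u_{a,3}})=\infty$.

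\medskip
\emph{The first assertion in \eqref{lima} (the main difficulty).} One must show that $u_{a,3}$ actually bends down for $-a$ large, i.e.\ $\phi(\infty)<0$, or $\phi(\infty)=0$ together with fast enough decay of $u$; equivalently, one must rule out --- for $-a$ large --- the degenerate case $\phi(\infty)=\psi(\infty)=0$ in which $u$ decays no faster than $-\log r$ and $V=\infty$. By \eqref{phirep} and \eqref{tiny}, $\phi=48+O((-a)^{-1})$ uniformly on $[0,2\sqrt{-a}\,]$; consequently $\psi$ stays within $O(1)$ of $6a+4r^2$ there, $\psi(r_0)\approx10(-a)>0$ where $r_0:=2\sqrt{-a}$, and (since $u\le P$ and $u$ is increasing past $r_0$) $u$ returns to positive values at a radius $\approx r_0$. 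Once $u>0$ the forcing $e^{6u}$ grows and deposits mass at radius $\approx r_0$. The plan is (a) to rescale the solution near $r_0$ and identify its limit as $a\to-\infty$ with a finite–volume solution of the rescaled limiting equation, (b) to control the error of this approximation, and thereby to conclude that a definite amount of mass is produced there --- comfortably enough to make $K>48$, i.e.\ $\phi(\infty)<0$ --- so that $u(r)\sim\frac{\phi(\infty)}{384}r^4\to-\infty$ and $V<\infty$. The main obstacle is exactly steps (a)--(b): the operator $\Delta^3$ obeys no comparison principle, and the soft bounds \eqref{uleP} are far too crude to locate $u$ near $r_0$, so one needs genuinely quantitative control of the triple $(u,\psi,\phi)$ across the turning region $r\approx r_0$.
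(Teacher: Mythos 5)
Your argument for the second half of \eqref{lima} is correct and genuinely different from (and, for that half, simpler than) the paper's. You observe that $\Delta^2u$ is nonincreasing with nonpositive limit at infinity, so that $\int_0^\infty t^{-5}\int_0^t s^5e^{6u}\,ds\,dt\ge 48$; if $V<\infty$, the portion of this integral over $t\ge\sqrt{-a}$ is at most $V/(4\omega_5a^2)$ while the portion over $t\le\sqrt{-a}$ is $O((-a)^{-1})$, forcing $V\ge c\,a^2$. This even yields a quantitative rate, whereas the paper deduces divergence from a finer analysis of the interval where $u>0$. One small repair: your bound $\int_0^ts^5e^{6u}\,ds=O((-a)^{-3})$ cannot be inserted directly into $\int_0^{\sqrt{-a}}t^{-5}(\cdots)\,dt$, since $t^{-5}$ is not integrable at $0$; you must also use $\int_0^ts^5e^{6u}\,ds\le t^6/6$ for small $t$ and split at $t\sim(-a)^{-1/2}$, after which the claimed $O((-a)^{-1})$ does come out.

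The first assertion of \eqref{lima} --- finiteness of $\int_{\R{6}}e^{6u_{a,3}}\,dx$ for $-a$ large --- is, however, not proved: you state a plan (rescale near $r_0\approx2\sqrt{-a}$, identify a blow-up limit, control the error) and then acknowledge that you cannot carry out its main steps. Since this is the heart of the theorem, the proposal has a genuine gap. The paper closes it without any comparison principle or blow-up analysis, using only the sign of the nonlinearity together with the iterated radial integration formula \eqref{div2}. Because $\Delta^3u\le-1$ wherever $u\ge0$, the solution, having crossed zero at $r_0$ with $u'(r_0)>0$ and $\Delta u(r_0)\ge-10a-C$, must return to zero at some finite $r_1$; hence $u'(r_2)\le0$ for some $r_2\in(r_0,r_1]$, and therefore $\Delta u(r_3)<0$ for some $r_3\in(r_0,r_2]$. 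Thus $\Delta u$ drops by roughly $-10a\to+\infty$ on $(r_0,r_3)$, and the monotonicity of $\Delta^2u$ converts this into $\Delta^2u(r_3)(r_3^2-r_0^2)\to-\infty$ (this is \eqref{stima12}). In particular, once $-a$ is large, $\Delta^2u(r)\le\Delta^2u(r_3)<B<0$ for all $r\ge r_3$, and three further applications of \eqref{div2} give $u(r)\le\frac{B}{384}r^4$ for $r$ large, whence convergence of the integral. This elementary ``forced return to zero'' mechanism is exactly the quantitative control across the turning region that you identified as missing.
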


Theorem \ref{trm0} will follow from Theorem \ref{trm1} and a continuity argument (Lemma \ref{cont} below).

Going through the proof of Theorem A it is clear that it does not extend to the case $V > \vol(S^{2m})$. With a different approach, we are able to prove that, at least when $m\ge 3$ is odd, one can extend Theorem A as follows.
\begin{trm}\label{trm3} For every $m\ge 3$ odd there exists $V_m>\vol(S^{2m})$ such that for every $V\in (0,V_m]$ there is a non-spherical solution $u$ to \eqref{eq0}-\eqref{area}, i.e. there exists a metric on $\R{2m}$ of the form $g_u=e^{2u}|dx|^2$ satisfying
$Q_{g_u}\equiv (2m-1)!$ and $\vol(g_u)=V.$
\end{trm}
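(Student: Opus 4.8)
\medskip\noindent
The idea is to produce the required solutions among the rotationally symmetric ones, for which \eqref{eq0} becomes an ODE, and to study how the volume depends on the Cauchy data at the origin. Since $m$ is odd, $(-\Delta)^m=-\Delta^m$, so \eqref{eq0} reads $\Delta^m u=-(2m-1)!\,e^{2mu}$ in $\R{2m}$, an ODE of order $2m$ for radial $u$. Smoothness at the origin forces all odd derivatives of $u$ to vanish there, so the data reduce to $u(0),\Delta u(0),\dots,\Delta^{m-1}u(0)$, and a dilation — which preserves the volume — normalizes $u(0)=0$; one is left with the $(m-1)$-parameter family $u_\beta$, $\beta=(\beta_1,\dots,\beta_{m-1})$, with $\Delta^j u_\beta(0)=\beta_j$ (for $m=3$ this is essentially the family of \eqref{ODE}). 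Note that $m\ge 3$ makes this parameter space at least two-dimensional, in contrast with $m=2$, a point that will reappear below.

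The core of the argument — and the step I expect to be the main obstacle — is the ODE analysis. One must exhibit an open connected set $\mathcal G\subset\R{m-1}$ containing the parameter $\beta_0$ of the spherical solution $u_{0,1/2}$ (the unique radial solution of \eqref{eq0} with $u(0)=0$), such that for $\beta\in\mathcal G$ the solution $u_\beta$ is defined on all of $[0,\infty)$ and decays at infinity fast enough — $u_\beta(r)=-\alpha(\beta)\log r+O(1)$ with $\alpha(\beta)>1$, or $u_\beta$ polynomially negative — that $V(\beta):=\int_{\R{2m}}e^{2mu_\beta(x)}\,dx$ is finite, and such that $V:\mathcal G\to(0,\infty)$ is continuous. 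This requires ruling out, uniformly on compact subsets of $\mathcal G$, finite-time blow-up of the ODE, and controlling the asymptotic profile of $u_\beta$; it is precisely here that the sign in $\Delta^m u=-(\cdot)\,e^{2mu}$, i.e. the parity of $m$, is used, through the monotonicity and Pohozaev-type identities behind \cite{mar1}, \cite{Lin} and the proofs of Theorems \ref{trm0} and \ref{trm1}. For $m$ even and positive $Q$-curvature the equation becomes $\Delta^m u=+(\cdot)\,e^{2mu}$ and this analysis must break down — Theorem B is exactly such an obstruction for $m=2$ — whereas it does go through for $m$ even and negative $Q$-curvature, which is why the companion statement holds there.

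Granting all this, one concludes as follows. At $\beta_0$ one has $V(\beta_0)=\vol(S^{2m})$, and analyzing the linearization of the ODE at $u_{0,1/2}$ — again sensitive to the parity of $m$ — one shows $\nabla V(\beta_0)\ne 0$; hence $V$ attains on $\mathcal G$ a value $V_m>\vol(S^{2m})$. Since $\mathcal G$ is connected and $V$ continuous, $V(\mathcal G)$ is an interval containing $[\vol(S^{2m}),V_m]$, and together with Theorem A (existence for every $V\in(0,\vol(S^{2m}))$) this yields a solution of \eqref{eq0}-\eqref{area} for every $V\in(0,V_m]$. Finally, such a solution can be taken non-spherical: this is automatic when $V\ne\vol(S^{2m})$, since every spherical metric has volume exactly $\vol(S^{2m})$, while for $V=\vol(S^{2m})$ one uses $\nabla V(\beta_0)\ne 0$ to see that the level set $V^{-1}(\vol(S^{2m}))$ is, near $\beta_0$, a submanifold of dimension $m-2$, which is positive precisely because $m\ge 3$; so there is data $\beta\ne\beta_0$ with $V(\beta)=\vol(S^{2m})$, and $u_\beta$ is then non-spherical — among radial solutions normalized by $u(0)=0$, only $\beta_0$ gives a spherical one. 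The delicate point throughout is the ODE analysis of the second paragraph: uniform global existence and sharp decay of $u_\beta$, hence finiteness and continuity of $V$.
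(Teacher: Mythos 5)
Your overall strategy --- restrict to radial solutions, track how the volume depends on the Cauchy data at the origin, and conclude by continuity and the intermediate value theorem --- is indeed the paper's strategy, but the two steps you yourself flag as delicate are precisely where the argument has genuine gaps. The first is the source of a volume \emph{exceeding} $\vol(S^{2m})$. You derive it from the claim $\nabla V(\beta_0)\ne 0$ at the spherical datum, ``by analyzing the linearization''; no such analysis is given, and the claim is not automatic: for $m=2$ Theorem B forces $V\le \vol(S^{4})$ with equality only at spherical solutions, so there the corresponding derivative \emph{vanishes}, and nothing in your outline identifies what changes for $m\ge 3$ odd. The paper does not linearize at all. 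It takes the single explicit datum with all derivatives of $u$ vanishing at the origin ($b=0$ in its notation), observes that then $u_0\le 0$, hence $\Delta^m u_0\ge -(2m-1)!$, and compares with the explicit polynomial $\psi(r)=-r^{2m}/(2^{2m}m)$ solving $\Delta^m\psi=-(2m-1)!$ (via \eqref{lapl}) to get $u_0\ge\psi$, whence $V(0)\ge\int_{\R{2m}} e^{2m\psi}\,dx=V_m$ with the explicit value \eqref{Vm0}; that $V_m>\vol(S^{2m})$ is then a short computation. This comparison argument is the real content of the theorem and is absent from your proposal.

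The second gap is finiteness and continuity of $V$ on an open neighborhood $\mathcal G$ of the spherical datum in the full $(m-1)$-parameter space: this is asserted, not proved, and is genuinely delicate. The paper itself warns in Section 5 that already for $m=3$ the two-parameter volume $V(a,b)$ need not be continuous everywhere, and finiteness of the volume is not an obviously open condition (a small perturbation can push the decay rate at infinity past the integrability threshold). The paper sidesteps this by working only with the one-parameter family $u_b$, $b\le 0$, for which $\Delta^{m-1}u_b$ is strictly decreasing and eventually bounded above by a negative constant; this yields the strong polynomial decay \eqref{ub} and makes both finiteness and continuity provable by the argument of Lemma \ref{cont}. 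The full range $(0,V_m]$ then comes from $V(b)\to 0$ as $b\to-\infty$ and the intermediate value theorem, with no need to invoke Theorem A; and non-sphericity at $V=\vol(S^{2m})$ follows at once from \eqref{ub} (polynomial decay is incompatible with \eqref{stand}), rather than from a level-set argument that again presupposes the unproved smoothness and nondegeneracy of $V$ at $\beta_0$.
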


The condition $m\ge 3$ odd is (at least in part) necessary  in view of Theorem B and \cite{CL}, but the case $m\ge 4$ even is open.
Notice also that when $m=3$, Theorems \ref{trm0} and \ref{trm3} guarantee the existence of solutions to \eqref{eq0}-\eqref{area} for
$$V \in (0,V_m]\cup  [V^*,\infty),$$
but we cannot rule out that $V_m<V^*$ (the explicit value of $V_m$ is given in \eqref{Vm0} below) and the existence of solutions to \eqref{eq0}-\eqref{area} is unknown for $V\in (V_m, V^*)$. Could there be a gap phenomenon?

\medskip

We now briefly investigate how large the volume of a metric $g_u=e^{2u}|dx|^2$ on $\R{2m}$ can be when $Q_{g_u}\equiv const<0$. Again with no loss of generality we assume $Q_{g_u}\equiv -(2m-1)!$.  In other words consider the problem
\begin{equation}\label{eq0-}
(-\Delta)^m u=-(2m-1)! e^{2mu} \quad \textrm{on }\R{2m}.
\end{equation}
Although for $m=1$ it is easy to see that Problem \eqref{eq0-}-\eqref{area} admits no solutions for any $V>0$, when $m\ge 2$ Problem \eqref{eq0-}-\eqref{area} has solutions for some $V>0$, as shown in \cite{mar2}. Then with the same proof of Theorem \ref{trm3} we get:

\begin{trm}\label{trm4} For every $m\ge 2$ even there exists $V_m>\vol(S^{2m})$ such that for $V\in (0,V_m]$ there is a solution $u$ to \eqref{eq0-}-\eqref{area}, i.e. there exists a metric on $\R{2m}$ of the form $g_u=e^{2u}|dx|^2$ satisfying
$$Q_{g_u}\equiv -(2m-1)!, \quad \vol(g_u)=V.$$
\end{trm}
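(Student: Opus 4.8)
The plan is to obtain Theorem \ref{trm4} as a direct parallel to the proof of Theorem \ref{trm3}, replacing the positive constant $(2m-1)!$ by $-(2m-1)!$ throughout and using the existence result of \cite{mar2} in place of Theorem A as the ``seed'' solution. First I would recall that \cite{mar2} provides, for every even $m\ge 2$, at least one solution $u_0$ to \eqref{eq0-}--\eqref{area} with some volume $V_0>0$, and moreover — just as in the positive case — a whole one-parameter family obtained by the scaling $u\mapsto u-\tfrac{1}{2m}\log\lambda$, which solves $(-\Delta)^m v=-\lambda(2m-1)!e^{2mv}$ with volume $V_0/\lambda$. Since the sign of the right-hand side is irrelevant to this scaling, this already yields solutions of \eqref{eq0-}--\eqref{area} for all $V\in(0,\infty)$ after renormalising $\lambda$; the point of Theorem \ref{trm4}, however, is the sharper statement that one can take $V$ up to a threshold $V_m$ \emph{strictly larger} than $\vol(S^{2m})$ while keeping the constant in \eqref{eq0-} fixed at exactly $-(2m-1)!$, which is what the ``same proof of Theorem \ref{trm3}'' delivers.

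So the real content is to transcribe the argument behind Theorem \ref{trm3}. I expect that proof to build a non-spherical solution by perturbing, or taking a limit of, spherical-type configurations: one writes $u=u_{x_0,\lambda}+w$ (or a sum/superposition of such bubbles) and solves for the correction $w$ via a fixed-point or variational scheme, the key inequality being that the available volume can be pushed slightly past $\vol(S^{2m})$ because the construction in odd dimension $m\ge 3$ has a one-sided flexibility that the four-dimensional rigidity of Theorem B forbids. To adapt this to \eqref{eq0-} I would check, step by step, that every estimate used is insensitive to the overall sign: the linearised operator $(-\Delta)^m$ is unchanged, the kernel coming from Möbius invariance is the same, and the nonlinearity $e^{2mu}$ still has the same monotonicity and convexity; only the sign in front of it flips, which amounts to replacing $w$ by $-w$ in the relevant sub/supersolution comparisons. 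The constant $V_m$ in \eqref{Vm0} would be defined by the same formula, and one verifies $V_m>\vol(S^{2m})$ exactly as before, the parity of $m$ entering through the sign of whatever the critical coefficient in that formula turns out to be.

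The main obstacle I anticipate is not any single estimate but making sure the sign reversal is genuinely harmless at \emph{every} stage — in particular wherever the original proof of Theorem \ref{trm3} invokes a maximum principle, an energy positivity, or an a priori bound that implicitly used $Q>0$. For instance, controlling $\int e^{2mu}$ from below (to reach volumes near $V_m$) may in the positive case rely on the solution staying above a spherical profile, whereas in the negative case the natural comparison goes the other way; one has to confirm that the corresponding one-sided bound still holds and still gives the required volume range. A secondary point is that the seed solutions from \cite{mar2} for negative $Q$-curvature may have a more restricted structure than the radial solutions of Theorem A (e.g. they might not be radially symmetric, or might only exist for $V$ in a limited interval), so I would need to verify that whatever properties of the seed are used in the Theorem \ref{trm3} argument — radial symmetry, decay rate of $u$ at infinity, the value of $\Delta^j u(0)$, or the sign of certain integral quantities — are actually available for the \cite{mar2} solutions. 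If \cite{mar2} supplies a sufficiently rich family (which the phrasing ``the same proof'' strongly suggests), the rest is a routine sign-tracking exercise and the theorem follows.
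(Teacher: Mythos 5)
Your proposal defers almost entirely to ``the proof of Theorem \ref{trm3}'', and the mechanism you guess for that proof --- perturbing or superposing spherical bubbles $u_{x_0,\lambda}+w$ and solving for the correction by a fixed-point or variational scheme --- is not what the paper does, and is in fact the kind of construction that Remark D explains cannot reach volumes at or above $\vol(S^{2m})$: perturbative arguments around spherical profiles are exactly the ones that break down at the threshold. The paper's actual argument is an elementary radial ODE shooting argument. One solves $\Delta^m u_b=-(2m-1)!\,e^{2mu_b}$ with all derivatives at $0$ vanishing except $u_b^{(2m-2)}(0)=b\le 0$; since $\Delta^m=(-1)^m(-\Delta)^m$, this single ODE is \eqref{eq0} when $m$ is odd and \eqref{eq0-} when $m$ is even --- that parity identity, which you do not mention, is the entire reason ``the same proof'' covers both theorems. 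The key quantitative step is a comparison with the explicit function $\psi(r)=-r^{2m}/(2^{2m}m)$, which satisfies $\Delta^m\psi=-(2m-1)!$ by \eqref{lapl}: since $u_0\le 0$ forces $\Delta^m u_0\ge -(2m-1)!$, one gets $u_0\ge\psi$ and hence $V(0)\ge\int e^{2m\psi}\,dx=V_m=\frac{(2m)!}{4(m!)^2}\omega_{2m}$, which is checked to exceed $\omega_{2m}$ by induction on $m$. Combined with continuity of $b\mapsto V(b)$ (the argument of Lemma \ref{cont}) and $V(b)\to 0$ as $b\to-\infty$ via the bound $u_b(r)\le E_m b r^{2m-2}$, the intermediate value theorem finishes the proof. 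None of this uses the seed solutions of \cite{mar2}, so your worries about their symmetry or structure are moot; that citation is only motivational.

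A secondary but genuine error: your claim that the scaling $u\mapsto u-\frac{1}{2m}\log\lambda$ ``already yields solutions of \eqref{eq0-}--\eqref{area} for all $V\in(0,\infty)$ after renormalising $\lambda$'' is false. That scaling changes the constant in the equation to $-\lambda(2m-1)!$; undoing the renormalisation to return to the fixed constant $-(2m-1)!$ returns the volume to its original value. It shows only that the choice of normalising constant is immaterial, not that the volume can be prescribed. As written, your proposal contains no self-standing proof of Theorem \ref{trm4}: the load-bearing ideas (the parity identity for $\Delta^m$, the explicit subsolution $\psi$, and the shooting parameter $b$ with the continuity-plus-IVT argument) are all missing.
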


The cases of solutions to \eqref{eq0}-\eqref{area} with $m$ even, or \eqref{eq0-}-\eqref{area} and $m$ odd  seem more difficult to treat since the ODE corresponding to \eqref{eq0} or \eqref{eq0-}, in analogy with \eqref{ODE} becomes
$$\Delta^m u(r)=(2m-1)! e^{2m u(r)},$$
whose solutions can blow up in finite time (i.e. for finite $r$) if the initial data are not chosen carefully (contrary to Lemma \ref{exist} below).

\section{Proof of Theorem \ref{trm1}}
Set $\omega_{2m-1}:=\vol(S^{2m-1})$ and let $B_r$ denote the unit ball in $\R{2m}$ centered at the origin.
Given a smooth radial function $f=f(r)$ in $\R{2m}$ we will often use the divergence theorem in the form
\begin{equation}\label{div1}
\int_{B_r}\Delta f dx= \int_{\de B_r} \frac{\de f}{\de\nu}d\sigma=\omega_{2m-1} r^{2m-1} f'(r).
\end{equation}
Dividing by $\omega_{2m-1} r^{2m-1}$ into \eqref{div1} and integrating we also obtain
\begin{equation}\label{div2}
f(t)-f(s)=\int_s^t\frac{1}{\omega_{2m-1} \rho^{2m-1}}\int_{B_\rho}\Delta f dxd\rho,\quad 0\le s\le t.
\end{equation}

When no confusion can arise we will simply write $u$ instead of $u_{a,3}$ or $u_{a,b}$ to denote the solution to \eqref{ODE}. In what follows, also other quantities (e.g. $R$, $r_0$, $r_1$, $r_2$, $r_3$, $\phi$, $\xi_1$, $\xi_2$) will depend on $a$ and $b$, but this dependence will be omitted from the notation.

\begin{lemma}\label{exist}
Given any $a,b\in \R{}$, the solution $u$ to the ODE \eqref{ODE} exists for all times.
\end{lemma}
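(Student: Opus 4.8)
The plan is to convert the sixth-order ODE $\Delta^3 u = -e^{6u}$ into a first-order system and show that the solution cannot blow up in finite time. First I would introduce the natural radial variables: set $u_0 = u$, and use the iterated spherical averages coming from \eqref{div2}, namely let $u_1(r) = \frac{1}{\omega_{2m-1}r^{2m-1}}\int_{B_r}\Delta u\,dx$, and similarly define $u_2(r)$ and $u_3(r)$ from $\Delta^2 u$ and $\Delta^3 u$. Then $u_3(r)$ is essentially $\Delta^3 u = -e^{6u}$ averaged, so $u_3$ is bounded in terms of $\sup_{[0,r]} e^{6u}$, and $u_2, u_1, u_0$ are recovered from $u_3$ by successive integrations of the form in \eqref{div2}, which only involve $r$-dependent kernels that are locally bounded away from $r=0$ (and integrable at $r=0$ because $u$ is smooth there).

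The key point is a Gronwall-type a priori bound. On any finite interval $[0,R]$, the representation formulas give $u(r) \le C(R) + C(R)\int_0^r e^{6u(s)}\,ds$ for some continuous function $C(R)$ depending on $R$, $a$, $b$ (the polynomial-in-$r$ terms coming from the initial data $a$ and $b$, plus the triple integral of $-e^{6u}$, whose sign actually helps since it pushes $u$ \emph{down}, but for an upper bound I just estimate $|{-e^{6u}}|=e^{6u}$). Exponentiating, $y(r) := e^{6u(r)}$ satisfies $y(r) \le \exp\bigl(6C(R) + 6C(R)\int_0^r y(s)\,ds\bigr)$; a standard comparison argument (or differentiating the integral) shows $y$ cannot escape to $+\infty$ on $[0,R)$. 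Since $R$ is arbitrary and local existence/uniqueness for this smooth ODE system is classical, the solution extends to all of $[0,\infty)$.

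I should be a little careful about two technical matters. The first is the behavior at $r=0$: the kernels $\rho^{-(2m-1)}$ in \eqref{div2} are singular there, but since the integrand $\int_{B_\rho}\Delta f\,dx$ vanishes like $\rho^{2m}$ as $\rho\to 0$ (because $\Delta f$ is continuous), each integration is harmless; alternatively one works on $[\delta, R]$ and lets $\delta\to 0$ using the prescribed initial data. The second is that the nonlinearity $-e^{6u}$ has a sign, so one might worry about $u\to -\infty$; but $e^{6u}$ is then bounded (by something going to $0$), so no blow-up occurs from that side either, and in fact the Gronwall bound above is all that is needed. The main obstacle is organizing the bookkeeping of the three nested integrations cleanly enough that the constant $C(R)$ is visibly finite and the Gronwall step is rigorous — the argument is not deep, but it requires writing $u$ explicitly in terms of the repeated kernel against $-e^{6u}$ plus the explicit polynomial contribution of the initial conditions.

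Concretely, iterating \eqref{div2} three times yields a representation
$$u(r) = P_{a,b}(r) - \int_0^r K(r,s)\, e^{6u(s)}\, s^{2m-1}\,ds,$$
where $P_{a,b}$ is an explicit polynomial (determined by $a$, $b$, and the vanishing of the odd derivatives at $0$) and $K(r,s)\ge 0$ is the iterated kernel, continuous on $\{0\le s\le r\le R\}$ hence bounded there by some $C(R)$. Dropping the (favorable) negative term gives $u(r)\le P_{a,b}(r)\le C'(R)$ on $[0,R]$, so in fact $e^{6u}$ is \emph{bounded} on every $[0,R]$ by a constant depending only on $R$, $a$, $b$ — no Gronwall is even needed for the upper bound. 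This immediately bounds $\Delta^3 u$, then successively $\Delta^2 u$, $\Delta u$, $u$ and all their radial derivatives on $[0,R]$, so the solution and its derivatives stay bounded on bounded intervals and hence the maximal interval of existence is $[0,\infty)$.
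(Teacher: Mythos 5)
Your final argument is correct and is essentially the paper's proof: the sign $\Delta^3 u=-e^{6u}\le 0$ lets you drop the nonlinear term and bound $u$ above by the explicit polynomial $\phi(r)=\frac a2 r^2+\frac b{24}r^4$ determined by the initial data, which bounds $e^{6u}$ on every $[0,R]$ and hence, by integrating back up, bounds $\Delta^2u$, $\Delta u$, $u$ and their derivatives there, precluding finite-time blow-up. One caveat: the Gronwall route you sketch first would \emph{not} have closed, since an inequality of the form $y\le\exp\bigl(C+C\int_0^r y\,ds\bigr)$ does permit finite-time blow-up of the comparison function; fortunately your last paragraph discards it in favor of the sign argument, which is exactly what the paper does.
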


\begin{proof}
Applying \eqref{div2} to $f=\Delta^2 u$, and observing that $\Delta(\Delta ^2 u)=-e^{6u}\le 0$ we get
\begin{equation}\label{stima11}
\Delta^2 u(t)\le \Delta^2 u(s) \le \Delta^2 u(0)=16 b\quad 0\le s\le t,
\end{equation}
i.e. $\Delta^2 u(r)$ is monotone decreasing.
This and \eqref{div2} applied to $\Delta u$ yield
$$\Delta u(r)\le \Delta u(0)+\int_0^r\frac{1}{\omega_5 \rho^5}\int_{B_\rho}16b dx d\rho=6a+\int_0^r \frac{8}{3}b\rho d\rho=6a+\frac{4}{3}br^2. $$
A further application of \eqref{div2} to $u$ finally gives
\begin{equation}\label{stima7}
u(r)\le \int_0^r\frac{1}{\omega_5 \rho^5}\int_{B_\rho}(6a+\frac{4}{3}b|x|^2)dxd\rho
=\int_0^r (a\rho+\frac{\rho^3b}{6})d\rho =\frac{a}{2}r^2+\frac{b}{24}r^4=:\phi(r).
\end{equation}
Similar lower bounds can be obtained by observing that $-e^{6u}\ge -1$ for $u\le 0$. This proves that $u(r)$ cannot blow-up in finite time and, by standard ODE theory, $u(r)$ exists for every $r\ge 0$.
\end{proof}

\noindent\emph{Proof of \eqref{lima} (completed)}. Fix $b=3$ and take $a<0$.
The function $\phi(r)=\frac{a}{2}r^2+\frac{1}{8}r^4$
vanishes for $r=R=R(a):=2\sqrt{-a}$. In order to prove \eqref{lima} we shall investigate the behavior of $u$ in a neighborhood of $R$. The heuristic idea is that
$$u^{(j)}(0)=\phi^{(j)}(0),\quad \text{for }0\le j\le 5,\quad \quad \Delta^3 \phi\equiv 0, $$
and for every $\ve>0$ on $[\ve,R-\ve]$ we have $\phi \le C_\ve a \to -\infty$ and $|\Delta^3 u| \le e^{C_\ve a}\to 0$ as $a\to-\infty$, hence for $r\in [0,R-\ve]$ we expect $u(r)$ to be very close to $\phi(r)$. On the other hand, $u$ cannot stay close to $\phi$ for $r$ much larger than $R$ because eventually $-\Delta^3 u(r)$ will be large enough to make $\Delta^2 u$, $\Delta u$ and $u$ negative according to \eqref{div2} (see Fig. 1). Then it is crucial to show that $u$ stays close to $\phi$ for some $r>R$ (hence in a region where $\phi$ is positive and $\Delta^3 u$ is not necessarily small) and long enough to make the second integral in \eqref{lima} blow up as $a\to -\infty$.

\begin{figure}
\begin{center}
  \includegraphics[scale=.5, trim= 0cm 12cm 0cm 7cm, clip=true]{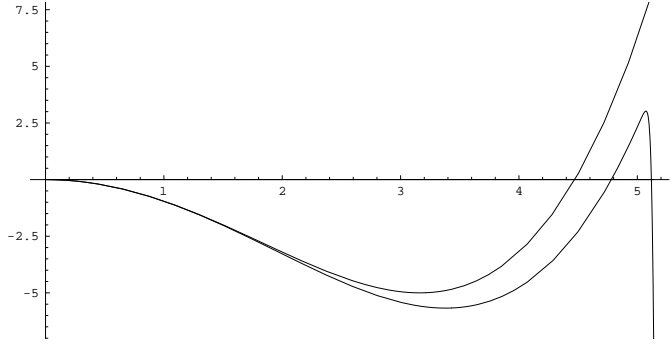}\label{fig1}
\caption{The functions $\phi(r)= \frac{a}{2}r^2+\frac{1}{8}r^4 $ (above) and $u_{a,3}(r)\le \phi(r)$. }
\end{center}
\end{figure}

\medskip
\noindent\emph{Step 1: Estimates of $u(R)$, $\Delta u(R)$ and $\Delta^2u(R)$.}
From \eqref{stima7} we infer
$$\Delta^3 u=-e^{6u}\ge -e^{6\phi},$$
which, together with \eqref{div2}, gives
\begin{equation}\label{stima1}
\Delta^2 u(r)=\Delta^2 u(0)+\int_0^r\frac{1}{\omega_5 \rho^5}\int_{B_\rho}\Delta^3 u dxd\rho
\ge 48 - \int_0^r\frac{1}{\omega_5 \rho^5}\int_{B_\rho}e^{6\phi(|x|)} dxd\rho.
\end{equation}
We can explicitly compute (see Lemma \ref{int} below and simplify \eqref{int2} using that $\phi(R)=0$ and $\int_{\sqrt{3}a}^{-\sqrt{3}a}e^{t^2}dt=2\int_{0}^{-\sqrt{3}a}e^{t^2}dt$)
$$\int_0^R\frac{1}{\omega_5 \rho^5}\int_{B_\rho}e^{6\phi(|x|)} dxd\rho= \frac{1}{48a}+\frac{(18a^2+1)\sqrt{3}}{144 a^2  }e^{-3 a^2} \int_0^{-\sqrt{3} a}e^{t^2}dt.$$
Then by \eqref{stima11} and Lemma \ref{lim} below we conclude that
\begin{equation}\label{stima16}
\Delta^2 u(r)\ge \Delta^2 u(R)\ge 48(1+O(a^{-1}))\quad \text{ for }0 \le r\le R=2\sqrt{-a}.
\end{equation}
where here and in the following $|a^kO(a^{-k})|\le C=C(k)$ as $a\to -\infty$ for every $k\in\R{}$.
Then applying \eqref{div2} as before we also obtain
$$\Delta u(r)\ge 6a+4(1+O(a^{-1})) r^2\quad \text{ for }0 \le r\le R$$
and
$$u(r)\ge \frac{a}{2}r^2+\frac{1+O(a^{-1})}{8}r^4=\phi(r)+O(a^{-1})r^4\quad \text{ for }0 \le r\le R. $$
At $r=R$ this reduces to
$$u(R)\ge O(a).$$

\medskip
\noindent\emph{Step 2: Behavior of $u(r)$, $\Delta u(r)$, $\Delta^2 u(r)$ for $r\ge R$.}
Define $r_0$ (depending on $a<0$) as
$$r_0:=\inf\{r>0: u(r)=0\} \in [R,\infty].$$
We first claim that $r_0<\infty$.
We have by Lemma \ref{int} and Lemma \ref{lim}
\begin{equation}\label{stima13}
\int_{B_{R}}e^{6\phi} dx=\omega_5\bigg(-\frac{4 a}{3}+\frac{4(6a^2-1)\sqrt{3}}{9}e^{-3 a^2} \int_0^{-\sqrt{3}a} e^{t^2}dt \bigg)=O(a).
\end{equation}
Since on $B_{r_0}$ we have $u\le 0$, hence $\Delta^3 u\ge -1$, using \eqref{div1}-\eqref{div2} and \eqref{stima13} we get for $r\in [R,r_0]$
\begin{equation}\label{stima18}
\begin{split}
\Delta^2 u(r)&\ge \Delta^2 u(R)-\int_{R}^{r}\frac{1}{\omega_5 \rho^5}\bigg(\int_{B_{R}} e^{6\phi}dx+\int_{B_\rho\setminus B_{R}} 1dx\bigg)d\rho\\
&\ge  48+O(a)\bigg[\frac{1}{R^4}-\frac{1}{r^4}\bigg]-\int_R^r\frac{\rho^6-R^6}{6\rho^5}d\rho
\end{split}
\end{equation}
Assuming $r\in [R, 2R]$ we can now bound with a Taylor expansion
\begin{equation}\label{stima14}
\frac{1}{R^4}-\frac{1}{r^4}=R^{-4}\tilde O\Big(\frac{r-R}{R}\Big)
\end{equation}
and
$$\rho^6-R^6\le r^6-R^6= R^6 \tilde O\Big(\frac{r-R}{R}\Big),\quad \text{for }\rho\in [R,r],$$
which together with \eqref{stima14} yields
\begin{equation}\label{stima15}
\int_R^r\frac{\rho^6-R^6}{6\rho^5}d\rho\le \int_R^r\frac{r^6-R^6}{6\rho^5}d\rho\le R^2\tilde O\Big(\Big(\frac{r-R}{R}\Big)^2\Big),
\end{equation}
where for any $k\in\R{}$ we have $|t^{-k}\tilde O(t^k)|\le C=C(k)$ uniformly for $0\le t\le 1$.
Using \eqref{stima14} and \eqref{stima15} we bound in \eqref{stima18}
$$\Delta^2 u(r)\ge 48+O(a^{-1})\tilde O\Big(\frac{r-R}{R}\Big) +R^2\tilde O\Big(\Big(\frac{r-R}{R}\Big)^2\Big),\quad r\in [R, \min\{r_0,2R\}],$$
whence
$$\Delta^2 u(r)\ge 48+O(a^{-1}) +R^2\tilde O\Big(\Big(\frac{r-R}{R}\Big)^2\Big)\chi_{(R,\infty)}(r),\quad r\in [0,\min \{r_0,2R\}],$$
where $\chi_{(R,\infty)}(r)=0$ for $r\in [0,R]$ and $\chi_{(R,\infty)}(r)=1$ for $r>R$.
Then with \eqref{div2} we estimate for $r\in [0,\min \{r_0,2R\}]$
\begin{equation}\label{stima3}
\begin{split}
\Delta u(r)&\ge 6a+4(1+O(a^{-1}))r^2+ \chi_{(R,\infty)}(r)\int_{R}^r\frac{1}{\omega_5\rho^5}\int_{B_\rho\setminus B_R}R^2\tilde O\Big(\Big(\frac{|x|-R}{R}\Big)^2\Big)dxd\rho\\
&=6a+4(1+O(a^{-1}))r^2+R^4\tilde O\Big(\Big(\frac{r-R}{R}\Big)^4\Big)\chi_{(R,\infty)}(r).
\end{split}
\end{equation}
and
\begin{equation}\label{stima4}
\begin{split}
u(r)&\ge \frac{a}{2}r^2+\frac{1+O(a^{-1})}{8}r^4+\chi_{(R,\infty)}(r)\int_{R}^r\frac{1}{\omega_5\rho^5}\int_{B_\rho\setminus B_R}R^4\tilde O\Big(\Big(\frac{|x|-R}{R}\Big)^4\Big) dxd\rho\\
&=\phi(r)+O(a^{-1})r^4+R^6\tilde O\Big(\Big(\frac{r-R}{R}\Big)^6\Big)\chi_{(R,\infty)}(r),
\end{split}
\end{equation}
where the integrals in \eqref{stima3} and \eqref{stima4} are easily estimated bounding $|x|$ with $r$ and applying \eqref{stima15}.

Making a Taylor expansion of $\phi(r)$ at $r=R$ and using that $\phi(R)=0$, we can further estimate the right-hand side of \eqref{stima4} for $r\in [R,\min\{r_0,2R\}]$ as
\begin{equation*}
\begin{split}
u(r)&\ge \phi'(R)(r-R) + R^2 \tilde O\Big(\Big(\frac{r-R}{R}\Big)^2\Big)+O(a^{-1})r^4+R^6\tilde O\Big(\Big(\frac{r-R}{R}\Big)^6\Big)\\
&=-a R(r-R) +O(a^{-1})R^4+ R^2 \tilde O\Big(\Big(\frac{r-R}{R}\Big)^2\Big)+R^6\tilde O\Big(\Big(\frac{r-R}{R}\Big)^6\Big)=:\psi_a(r)
\end{split}
\end{equation*}
Now choosing $r=R(1+1/\sqrt{-a})$, so that $(r-R)/R\to 0$ as $a\to-\infty$, we get
\begin{equation*}
\lim_{a\to -\infty}\psi_a(R(1+1/\sqrt{-a})) \ge \lim_{a\to -\infty}\bigg( 4(-a)^\frac{3}{2} +O(a)-C\bigg)=\infty.
\end{equation*}
In particular
$$r_0\in [R, R(1+1/\sqrt{-a})].$$
We now claim that
\begin{equation}\label{stima2}
\lim_{a\to-\infty}\Delta u(r_0)=\infty.
\end{equation}
Indeed we infer from \eqref{stima3}
\begin{equation*}
\begin{split}
\Delta u(r_0)&\ge 6a+4(1+O(a^{-1})) r_0^2-C\\
&\ge 6a+4(1+O(a^{-1}))R^2-C\\
&\ge -10a-C,
\end{split}
\end{equation*}
for $-a$ large enough, whence \eqref{stima2}. 
Set
$$r_1=r_1(a):=\inf\{r>r_0: u(r)=0\}.$$
Applying \eqref{div1} to \eqref{stima3}, and recalling that $\frac{r_0-R}{R}\le\frac{1}{\sqrt{a}}$, similar to \eqref{stima4} we obtain
\begin{equation*}
\begin{split}
u'(r_0)&\ge ar_0+\frac{1+O(a^{-1})}{2}r_0^3-C\\
&\ge ar_0 +\frac{1+O(a^{-1})}{2}r_0 R^2 -C\\
&\ge -ar_0 -C.
\end{split}
\end{equation*}
In particular for $-a$ large enough we have $u'(r_0)>0$, which implies $r_1>r_0$. Using \eqref{div1}-\eqref{div2} and that $\Delta^3 u(r)\le -1$ for $r\in [r_0,r_1]$, it is not difficult to see that $r_1<\infty$. 
Moreover there exists at least a point $r_2=r_2(a)\in (r_0,r_1]$ such that $u'(r_2)\le 0$, which in turn implies that
\begin{equation}\label{stima5}
\Delta u(r_3)<0\quad \text{for some }r_3=r_3(a)\in (r_0,r_2],
\end{equation}
since otherwise we would have by \eqref{div1}
$$u'(r_2)=\frac{1}{\omega_5r_2^5}\int_{B_{r_0}}\Delta u dx+\frac{1}{\omega_5r_2^5}\int_{B_{r_2}\setminus B_{r_0}}\Delta u dx\ge \frac{r_0^5}{r_2^5}u'(r_0)>0, $$
contradiction.

\medskip

\noindent\emph{Step 3: Conclusion.} We now use the estimates obtained in Step 1 and Step 2 to prove \eqref{lima}. 

From \eqref{div2}, \eqref{stima2} and \eqref{stima5} we infer
\begin{equation}\label{stima8}
\lim_{a\to-\infty}\int_{r_0}^{r_3}\frac{1}{\omega_5 r^5}\int_{B_r}\Delta^2 u dx dr=\lim_{a\to-\infty}(\Delta u(r_3)-\Delta u(r_0))=-\infty,
\end{equation}
hence by the monotonicity of $\Delta^2 u(r)$ (see \eqref{stima11})
\begin{equation}\label{stima12}
\lim_{a\to -\infty}\Delta^2 u(r_3)(r_3^2-r_0^2)=-\infty.
\end{equation}
We now claim that
\begin{equation}\label{stima6}
\lim_{a\to-\infty}\int_{B_{r_3}} e^{6u}dx=\infty.
\end{equation}
Indeed consider on the contrary an arbitrary sequence $a_k$ with $\lim_{k\to\infty}a_k=-\infty$ and
\begin{equation}\label{stima19}
\lim_{k\to \to \infty}\int_{B_{r_3}} e^{6u}dx<\infty,
\end{equation}
where here $r_3$ and $u$ depend on $a_k$ instead of $a$ of course.
Since $u\ge 0$ in $B_{r_3}\setminus B_{r_0}$ we have
$$\int_{B_{r_3}} e^{6u}dx \ge  \int_{B_{r_3}\setminus B_{r_0}} 1dx  =\frac{\omega_5}{6}(r_3^6-r_0^6).$$
Now observe that 
$(r_3^6-r_0^6)\ge (r_3^2-r_0^2)r_0^4$
to conclude that \eqref{stima19} implies
\begin{equation}\label{r0r1}
\lim_{k\to \infty}(r_3^2-r_0^2)\le \lim_{k\to\infty}\frac{r_3^6-r_0^6}{r_0^4}= 0.
\end{equation}
Then \eqref{div2}, \eqref{stima16} and \eqref{stima12} yield
\begin{equation*}
\begin{split}
(r_3^2-r_0^2)\int_{R}^{r_3}\frac{1}{\omega_5 r^5}\int_{B_r}e^{6 u}dxdr& =(r_3^2-r_0^2)(\Delta^2 u(R)-\Delta^2 u(r_3))\\
&\ge  -\Delta^2 u(r_3)(r_3^2-r_0^2) \to\infty\quad \text{as }k\to \infty.
\end{split}
\end{equation*}
\medskip
By \eqref{r0r1} we also have
$$\lim_{k\to \infty}\int_{R}^{r_3}\frac{1}{\omega_5 r^5}\int_{B_r}e^{6 u}dxdr =\infty,$$
which implies at once
$$\lim_{k\to \infty}\int_{B_{r_3}} e^{6u}dx\ge \lim_{k\to \infty}4R^4\omega_5 \int_{R}^{r_3}\frac{1}{\omega_5 r^5}\int_{B_{r_3}}e^{6 u}dxdr =\infty,$$
contradicting \eqref{stima19}. Then \eqref{stima6} is proven.

It remains to show that
$$\int_{\R{6}}e^{6u}dx<\infty,$$
at least for $-a$ large enough. 
It follows from \eqref{stima12} and the monotonicity of $\Delta^2 u$ that for $-a$ large enough we have
\begin{equation}\label{ra0b}
\Delta^2 u(r)<B< 0,\quad \text{for }r\ge r_3,
\end{equation}
and,  using \eqref{div1}-\eqref{div2} as already done several times, we can find $r_a\ge r_3$ such that
\begin{equation}\label{rab}
\begin{split}
(\Delta u)'(r)&<\frac{B}{6}r,\quad \Delta u(r)<\frac{B}{12}r^2, \quad u'(r)<\frac{B}{96}r^3,\quad 
u(r)<\frac{B}{384}r^4,\quad \text{for }r\ge r_a.
\end{split}
\end{equation}
Then
$$\int_{\R{6}}e^{6u}dx \le \int_{B_{r_a}}e^{6u}dx+\int_{\R{6}\setminus B_{r_a}}e^{\frac{B}{64}|x|^2}dx <\infty,$$
as wished.  \hfill $\square$

\subsection{Two useful lemmas}

We now state and prove two lemmas used in the proof of Theorem \ref{trm1}.

\begin{lemma}\label{int} For $\phi(r)=\frac{a}{2}r^2+\frac{1}{8}r^4$, $a\le 0$, we have
\begin{equation}\label{int1}
\int_{B_r}e^{6\phi(|x|)}dx =\omega_5\bigg[\frac{2}{3}a+\frac{1}{3}e^{6\phi(r)}(-2a+r^2)+\frac{(12a^2-2)\sqrt{3}}{9}e^{-3a^2}\int^{-\sqrt{3}a}_{-\sqrt{3}(a+r^2/2)} e^{t^2}dt\bigg]=:\xi_1(r)
\end{equation}
and
\begin{equation}\label{int2}
\begin{split}
\int_0^r\frac{1}{\omega_5\rho^5}\int_{B_\rho}e^{6\phi(|x|)}dxd\rho=&\frac{-2a-e^{6\phi(r)}(-2a+r^2)}{12r^4}\\
& +\frac{(2-12a^2+3r^4)\sqrt{3}}{36r^4}e^{-3a^2}  \int^{-\sqrt{3}a}_{-\sqrt{3}(a+r^2/2)} e^{t^2}dt :=\xi_2(r) 
\end{split}
\end{equation}
\end{lemma}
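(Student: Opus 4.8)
\textbf{Plan of proof for Lemma \ref{int}.}

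The strategy is a direct computation. Since $\phi$ is radial, write $\int_{B_r} e^{6\phi(|x|)}dx = \omega_5\int_0^r e^{6\phi(\rho)}\rho^5\,d\rho$, and the whole task is to find a closed-form antiderivative of $\rho\mapsto e^{6\phi(\rho)}\rho^5$. With $\phi(\rho)=\tfrac{a}{2}\rho^2+\tfrac18\rho^4$ one has $6\phi(\rho)=3a\rho^2+\tfrac34\rho^4=\tfrac34(\rho^2+2a)^2-3a^2$, so after the substitution $s=\rho^2$ (hence $\rho^5\,d\rho=\tfrac12 s^2\,ds$) the integral becomes $\tfrac12 e^{-3a^2}\int_0^{r^2} s^2 e^{\frac34(s+2a)^2}\,ds$. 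A further shift $t$-substitution, namely $t=-\sqrt{3}\,\tfrac{s+2a}{2}$ (chosen so that the Gaussian-type weight is $e^{t^2}$ and the limits match those appearing in \eqref{int1}), turns this into a polynomial in $t$ times $e^{t^2}$.

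One then integrates $\int (\alpha t^2+\beta t+\gamma)e^{t^2}\,dt$ term by term: the $t\,e^{t^2}$ piece integrates to $\tfrac12 e^{t^2}$ elementarily, the $t^2 e^{t^2}$ piece is handled by integration by parts (writing $t^2 e^{t^2} = t\cdot(t e^{t^2})$) reducing it to $\tfrac12 t e^{t^2} - \tfrac12\int e^{t^2}\,dt$, and the constant piece contributes the non-elementary $\int e^{t^2}\,dt$ term which survives in the final formula. Collecting the elementary parts (which recombine, after undoing both substitutions, into terms involving $e^{6\phi(r)}$ and the boundary value at $\rho=0$) and the $\int e^{t^2}\,dt$ part with its polynomial-in-$a$ coefficient $\tfrac{(12a^2-2)\sqrt{3}}{9}$ gives exactly \eqref{int1}; matching the lower limit $-\sqrt3(a+r^2/2)$ and upper limit $-\sqrt3 a$ is just bookkeeping from the substitution. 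Formula \eqref{int2} follows from \eqref{int1} by dividing $\xi_1(\rho)$ by $\omega_5\rho^5$ and integrating once more in $\rho$; here the $e^{6\phi(\rho)}$-terms and the constant term integrate elementarily against $\rho^{-5}$, while the $e^{t^2}$-integral term is dispatched by one more integration by parts (differentiating $\int^{-\sqrt3 a}_{-\sqrt3(a+\rho^2/2)} e^{t^2}dt$ in $\rho$ produces precisely an $e^{6\phi(\rho)}\rho$ factor), so everything closes up into the stated $\xi_2(r)$.

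There is really no conceptual obstacle here — it is an exercise in antidifferentiation — so the ``hard part'' is purely organizational: keeping track of the two changes of variables and the several integration-by-parts reductions without sign or constant errors, and verifying that the elementary pieces genuinely recombine into the compact $e^{6\phi(r)}(-2a+r^2)$ form and that the boundary contribution at $\rho=0$ produces the leading $\tfrac23 a$ (resp. the $-2a$ in the numerator of \eqref{int2}). One sanity check I would run is the limit $r\to 0^+$: the right-hand side of \eqref{int1} must vanish to order $r^6$ (since $\int_{B_r}e^{6\phi}\sim \omega_5 r^6/6$), and the right-hand side of \eqref{int2} must tend to $1/(48a)$-type finite behavior consistent with the expansion used right after \eqref{stima1}; a second check is to differentiate the claimed $\xi_1$ in $r$ and confirm one recovers $\omega_5 e^{6\phi(r)}r^5$. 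Passing both checks essentially certifies the formulas.
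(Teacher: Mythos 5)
Your derivation is correct, but it runs in the opposite direction from the paper's argument. The paper does not derive the formulas at all: it simply takes the stated closed forms $\xi_1,\xi_2$ and verifies them by differentiation, checking $\xi_1'(r)=\omega_5 r^5 e^{6\phi(r)}$, $\xi_2'(r)=\xi_1(r)/(\omega_5 r^5)$, $\xi_1(0)=0$, and $\lim_{r\to 0}\xi_2(r)=0$ --- the last point requiring Taylor expansions of $e^{6\phi(r)}$ and of the error-function term, since $\xi_2$ as written has $r^{-4}$ factors and the vanishing of the limit is not obvious. Your second ``sanity check'' is therefore precisely the paper's entire proof, while your main argument (substituting $s=\rho^2$, completing the square $6\phi=\tfrac34(s+2a)^2-3a^2$, shifting to $t=-\tfrac{\sqrt3}{2}(s+2a)$, and integrating $t^2e^{t^2}$ and $te^{t^2}$ by parts) is a genuine forward computation; I checked that it reproduces the coefficient $\tfrac{(12a^2-2)\sqrt3}{9}$, the boundary term $\tfrac23 a$, and the $\tfrac13 e^{6\phi(r)}(-2a+r^2)$ piece, so it is sound. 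The trade-off is the usual one: the verification proof is shorter and minimizes bookkeeping, whereas your derivation explains where the formulas come from. One small correction to your check of \eqref{int2}: the limit of $\xi_2(r)$ as $r\to 0^+$ is $0$, not a ``$1/(48a)$-type'' quantity; the value $\tfrac{1}{48a}+\cdots$ that appears after \eqref{stima1} is $\xi_2(R)$ evaluated at $r=R=2\sqrt{-a}$, not the $r\to 0$ limit. Since in your scheme \eqref{int2} is obtained as a definite integral from $0$ to $r$, you still need to confirm that your antiderivative vanishes as $\rho\to 0$ (exactly the expansion the paper carries out), so do not skip that step.
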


\begin{proof}
Patiently differentiating, using that $e^{-3a^2}\frac{d}{dr} \int^{-\sqrt{3}a}_{-\sqrt{3}(a+r^2/2))} e^{t^2}dt=\sqrt{3} r e^{6\phi(r)}$, one sees that
$$\xi_1'(r)=\omega_5 r^5e^{6\phi(r)},\quad \xi_2'(r)=\frac{\xi_1(r)}{\omega_5r^5}.$$
Using that $\phi(0)=0$ it is also easy to see that $\xi_1(0)=0$.

Since $\xi_2(0)$ is not defined, we will compute the limit of $\xi_2(r)$ as $r\to 0$. We first compute the Taylor expansions
$$e^{6\phi(r)}=1+3ar^2+\frac{3}{4}(1+6a^2)r^4+r^4o(1),$$
and
$$\sqrt{3}e^{-3a^2}  \int^{-\sqrt{3}a}_{-\sqrt{3}(a+r^2/2)} e^{t^2}dt  =\frac{3}{2}r^2+\frac{9}{4}ar^4+r^4o(1), $$
with errors $o(1)\to 0$ as $r\to 0$. Then
\begin{equation*}
\begin{split}
\frac{-2a-e^{6\phi(r)}(-2a+r^2)}{12r^4}&=\frac{(1-6a^2) r^2+(\frac{3}{2}a-9a^3)r^4}{12r^4}+o(1)\\
&=-\frac{(2-12a^2+3r^4)\sqrt{3}}{36r^4}e^{-3a^2}  \int^{-\sqrt{3}a}_{-\sqrt{3}(a+r^2/2)} e^{t^2}dt,
\end{split}
\end{equation*}
with $o(1)\to 0$ as $r\to 0$. Hence $\lim_{r\to 0}\xi_2(r)=0$.
\end{proof}

\begin{lemma}\label{lim} We have
\begin{equation}\label{eqint}
\lim_{r\to\infty} re^{-r^2}\int_0^r e^{t^2}dt=\frac{1}{2}.
\end{equation}
\end{lemma}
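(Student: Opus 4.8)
The statement \eqref{eqint} is the classical asymptotic $\int_0^r e^{t^2}\,dt\sim \frac{e^{r^2}}{2r}$ for the imaginary-argument error (Dawson-type) integral, and the plan is simply to isolate the leading term. The cleanest route is de l'Hôpital's rule. First I would set
\[
F(r):=\int_0^r e^{t^2}\,dt,\qquad G(r):=\frac{e^{r^2}}{r},
\]
so that the quantity in \eqref{eqint} equals $F(r)/G(r)$. Both functions are of class $C^1$ on $(0,\infty)$; since $e^{t^2}\ge 1$ we have $F(r)\ge r\to\infty$, and $G(r)\to\infty$ as well, so the quotient is an indeterminate form of type $\infty/\infty$.

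Next I would compute the derivatives: $F'(r)=e^{r^2}$ and
\[
G'(r)=\frac{d}{dr}\!\left(\frac{e^{r^2}}{r}\right)=\frac{2r^2e^{r^2}-e^{r^2}}{r^2}=e^{r^2}\,\frac{2r^2-1}{r^2},
\]
which is strictly positive — in particular nonzero — for $r>1$, so de l'Hôpital's rule applies on a neighbourhood of $+\infty$. Hence
\[
\lim_{r\to\infty}\frac{F(r)}{G(r)}=\lim_{r\to\infty}\frac{F'(r)}{G'(r)}=\lim_{r\to\infty}\frac{e^{r^2}}{e^{r^2}(2r^2-1)/r^2}=\lim_{r\to\infty}\frac{r^2}{2r^2-1}=\frac12,
\]
which is exactly \eqref{eqint}.

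This is a routine estimate and there is no real obstacle; the only point deserving a moment's attention is checking the hypotheses of de l'Hôpital's rule (that $F,G\to\infty$ and $G'\neq 0$ near $+\infty$), which is immediate from the formulas above. If one prefers to avoid de l'Hôpital entirely, an equivalent argument is to integrate by parts on $[1,r]$, writing $\int_1^r e^{t^2}\,dt=\frac{e^{r^2}}{2r}-\frac{e}{2}+\int_1^r\frac{e^{t^2}}{2t^2}\,dt$; bounding $\int_1^r\frac{e^{t^2}}{2t^2}\,dt\le\frac12\int_1^r e^{t^2}\,dt$ first shows that $re^{-r^2}F(r)$ is bounded, and then a splitting of $[1,r]$ at an intermediate point $r_0$ (on $[r_0,r]$ the factor $1/t^2$ is small) shows $re^{-r^2}\int_1^r\frac{e^{t^2}}{2t^2}\,dt\to 0$, leaving the leading term $re^{-r^2}\cdot\frac{e^{r^2}}{2r}=\frac12$.
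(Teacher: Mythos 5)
Your argument is correct. The hypotheses of de l'H\^opital's rule are verified ($F,G\to\infty$ and $G'>0$ for $r>1$), the derivative computation is right, and the limit $r^2/(2r^2-1)\to\frac12$ gives exactly \eqref{eqint}. The paper proves the lemma differently: it integrates by parts twice on $[2,r]$, obtaining
$$re^{-r^2}\int_2^r e^{t^2}\,dt=\frac{1}{2}-\frac{re^{-r^2+4}}{4}+re^{-r^2}\int_2^r \frac{e^{t^2}}{2t^2}\,dt,$$
and then kills the last term by a second integration by parts together with the monotonicity of $t^{-4}e^{t^2}$ on $[2,r]$. Your secondary sketch (one integration by parts followed by a splitting of the interval to show the remainder is negligible) is essentially this same strategy, so you have in effect covered both routes. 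What each buys: de l'H\^opital is shorter and requires no bookkeeping of error terms, while the integration-by-parts expansion is more explicit and would yield, with no extra work, the next-order correction $\frac{1}{4r^2}+O(r^{-4})$ to the asymptotics --- though the paper only needs the leading term, so for the purpose at hand the two proofs are equally adequate. No gaps.
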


\begin{proof} Clearly \eqref{eqint} is equivalent to
\begin{equation}\label{eqint2}
\lim_{r\to\infty} re^{-r^2}\int_2^r e^{t^2}dt=\frac{1}{2}.
\end{equation}
Integrating by parts we get for $r\ge 2$
\begin{equation}\label{eqint3}
re^{-r^2}\int_2^r e^{t^2}dt=\frac{1}{2}-\frac{re^{-r^2+4}}{4}+re^{-r^2}\int_2^r \frac{e^{t^2}}{2t^2}dt.
\end{equation}
Another integration by parts yields
$$re^{-r^2}\int_2^r \frac{e^{t^2}}{2t^2}dt=\frac{1}{4r^2}-\frac{re^{-r^2+4}}{32}+re^{-r^2}\int_2^r\frac{e^{t^2}}{12t^4}dt\to 0 \quad \text{as }r\to \infty,$$
where we used that the function $t^{-4}e^{t^2}$ is increasing on $[2,r]$, hence
$$0\le \int_2^r \frac{e^{t^2}}{12t^4}dt\le \int_2^r \frac{e^{r^2}}{12r^4}dt= (r-2)\frac{e^{r^2}}{12r^4}.$$
We conclude by taking the limit as $r\to \infty$ in \eqref{eqint3}.
\end{proof}

\section{Proof of Theorem \ref{trm0}}

We start with the following lemma.

\begin{lemma}\label{cont} Set
$$V(a)=\frac{1}{5!}\int_{\R{6}} e^{6u_{a,3}}dx$$
where $u=u_{a,3}$ is the solution to \eqref{ODE} for given $a<0$ and $b=3$. Then there exists $a^*<0$ such that $V$ is continuous on $( -\infty,a^*]$.
\end{lemma}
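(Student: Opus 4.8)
\noindent\emph{Proof proposal.} The plan is to combine continuous dependence of $u_{a,3}$ on the parameter $a$ with a dominated convergence argument. Fix once and for all $a^*<0$ with $-a^*$ so large that all the estimates in the proof of Theorem \ref{trm1} (in particular \eqref{stima7}, \eqref{stima11}, \eqref{ra0b} and \eqref{rab}) are available; I will show that $V$ is continuous at each $a_0\in(-\infty,a^*]$, which is exactly the assertion.

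First I would establish that $a\mapsto u_{a,3}$ is continuous in $C^5_{\loc}([0,\infty))$: if $a_k\to a_0$ then $u_{a_k,3}\to u_{a_0,3}$, together with five derivatives, uniformly on every compact subset of $[0,\infty)$. For $r$ bounded away from $0$ the radial equation $\Delta^3u=-e^{6u}$, written as a first-order system for $(u,u',\Delta u,(\Delta u)',\Delta^2u,(\Delta^2u)')$, is a regular ODE system, and by Lemma \ref{exist} its solution is global, so there this is the classical continuous dependence on initial data. Near $r=0$ one uses instead the integral identity \eqref{div2} applied to $f=u,\Delta u,\Delta^2u$, which realises $u_{a,3}$ as a fixed point of a contraction on $C^0([0,\delta])$ depending continuously on $(a,b)$ (the same mechanism as in \cite{mar1}); patching the two descriptions gives the claim. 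In particular $e^{6u_{a,3}(x)}\to e^{6u_{a_0,3}(x)}$ for every $x\in\R{6}$ as $a\to a_0$.

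The heart of the matter is to produce, for $a$ near $a_0$, an $a$-uniform integrable majorant for $e^{6u_{a,3}}$. By \eqref{ra0b} there is $B_0<0$ with $\Delta^2u_{a_0,3}(r)<2B_0$ for $r\ge r_3(a_0)$; put $\bar r:=r_3(a_0)$. By the continuous dependence above there is a compact neighbourhood $K\subset(-\infty,a^*]$ of $a_0$ on which $\Delta^2u_{a,3}(\bar r)<B_0$ and, moreover, the values $\Delta u_{a,3}(\bar r)$, $u'_{a,3}(\bar r)$, $u_{a,3}(\bar r)$ and $\int_{B_{\bar r}}\Delta^2u_{a,3}\,dx=\omega_5\bar r^{5}(\Delta u)'_{a,3}(\bar r)$ are bounded uniformly in $a\in K$. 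Since $\Delta^2u_{a,3}$ is monotone decreasing by \eqref{stima11}, we get $\Delta^2u_{a,3}(r)<B_0$ for all $r\ge\bar r$ and all $a\in K$; feeding this into \eqref{div1}--\eqref{div2} exactly as in the passage from \eqref{ra0b} to \eqref{rab}, the uniform bounds at $r=\bar r$ yield a radius $r_1\ge\bar r$ and constants $C_0$ and $c_1>0$, all independent of $a\in K$, with
$$u_{a,3}(r)<C_0-c_1 r^{4},\qquad r\ge r_1,\ a\in K.$$
On $B_{r_1}$ one uses instead \eqref{stima7}, namely $u_{a,3}(r)\le\phi_a(r)=\frac a2r^2+\frac18r^4\le M:=\max_{a\in K,\ r\in[0,r_1]}\phi_a(r)<\infty$. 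Hence
$$e^{6u_{a,3}(x)}\le g(x):=e^{6M}\chi_{B_{r_1}}(x)+e^{6C_0}e^{-6c_1|x|^{4}}\chi_{\R{6}\setminus B_{r_1}}(x)\in L^1(\R{6})\qquad\text{for all }a\in K.$$

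With these ingredients the conclusion is immediate: given any sequence $a_k\to a_0$, we may assume $a_k\in K$ for $k$ large; then $e^{6u_{a_k,3}}\to e^{6u_{a_0,3}}$ pointwise and $e^{6u_{a_k,3}}\le g\in L^1(\R{6})$, so dominated convergence gives $V(a_k)\to V(a_0)$. As $a_0\le a^*$ was arbitrary, $V$ is continuous on $(-\infty,a^*]$. The step I expect to be the main obstacle is precisely the construction of the $a$-uniform majorant, i.e. checking that the threshold radius $\bar r$, the radius $r_1$ beyond which $u_{a,3}$ decays quartically, and the constants $C_0,c_1$ can be chosen independently of $a$ near $a_0$. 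What makes this work is the monotonicity of $\Delta^2u_{a,3}$: a \emph{single} radius $\bar r$ at which $\Delta^2u_{a_0,3}$ is negative with room to spare controls the decay of $u_{a,3}$ for all nearby $a$, so one never has to track the $a$-dependence of the auxiliary radii $r_0(a),r_3(a)$ themselves.
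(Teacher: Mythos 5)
Your proposal is correct and follows essentially the same route as the paper: continuous dependence on $a$ in $C^k_{\loc}$ handles a fixed large ball, while the monotonicity of $\Delta^2 u_{a,3}$ propagates a strictly negative value at a single radius (stable under perturbation of $a$) into a quartic decay bound $u_{a,3}(r)\lesssim -c r^4$ that is uniform for nearby $a$ and controls the tail. The only difference is cosmetic: you package the conclusion via dominated convergence with an explicit $a$-uniform majorant, whereas the paper runs an $\ve/2+\ve/2$ splitting, and your treatment of the singular point $r=0$ in the continuous-dependence step is if anything slightly more careful than the paper's.
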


\begin{proof} It follows from \eqref{stima8} and the monotonicity of $\Delta^2 u$ that we can fix $-a^*$ so large that
$$\lim_{r\to \infty}\Delta^2 u_{a,3}(r)<0,\quad \text{for every }a\le a^*.$$ 
Fix now $\ve>0$. Given $a\le a^*$ it is not difficult to find $r_a>0$ and $B=B(a)<0$ such that
\begin{equation}\label{ra0}
\Delta^2 u_{a,3}(r)< B< 0,\quad \text{for }r\ge r_a
\end{equation}
and, possibly choosing $r_a$ larger,  using \eqref{div1}-\eqref{div2} as already done in the proof of Theorem \ref{trm1}, we get
\begin{equation}\label{ra}
\begin{split}
(\Delta u_{a,3})'(r)&<\frac{B}{6}r,\quad \Delta u_{a,3}(r)<\frac{B}{12}r^2, \quad u_{a,3}'(r)<\frac{B}{96}r^3,\quad 
u_{a,3}(r)<\frac{B}{384}r^4,\quad \text{for }r\ge r_a.
\end{split}
\end{equation}
By possibly choosing $r_a$ even larger we can also assume that
\begin{equation}\label{stima9}
\int_{\R{6}\setminus B_{r_a}}e^{\frac{B}{64}|x|^4}dx< \frac{\ve}{2}.
\end{equation}
By ODE theory the solution $u_{a,3}$ to \eqref{ODE} is continuous with respect to $a$ in $C^k_{\loc}(\R{6})$ for every $k\ge 0$, in the sense that for any $r'>0$, $u_{a',3}\to u_{a,3}$ in $C^k(B_{r'})$ as $a'\to a$.
In particular we can find $\delta>0$ (depending on $\ve$) such that if $|a-a'|<\delta$ then \eqref{ra0}-\eqref{ra} with $a$ replaced by $a'$ are still satisfied for $r=r_a$ (not $r_{a'}$) and \eqref{ra0} holds also for every $r>r_a$ since $\Delta^2 u_{a',3}(r)$ is decreasing in $r$ (see \eqref{stima11}). Then, with \eqref{div1}-\eqref{div2} we can also get the bounds in \eqref{ra} for every $r\ge r_a$ (and $u_{a',3}$ instead of $u_{a,3}$). For instance
\begin{equation*}
\begin{split}
(\Delta u_{a',3})'(r)&=\frac{1}{\omega_5r^5}\int_{B_r}\Delta^3 u_{a',3} dx =\bigg(\frac{r_{a}}{r}\bigg)^5 (\Delta u_{a',3})'(r_{a})+\frac{1}{\omega_5 r^5}\int_{B_r\setminus B_{r_{a}}} \Delta^2 u_{a',3}dx\\
&< \bigg(\frac{r_{a}}{r}\bigg)^5 \frac{Br_{a}}{6} +\frac{B(r^6-r_{a}^6)}{6r^5}=\frac{B}{6}r.
\end{split}
\end{equation*}
Furthermore, up to taking $\delta>0$ even smaller, we can assume that
\begin{equation}\label{stima10}
\bigg|\int_{B_{r_a}} e^{6u_{a',3}}dx -\int_{B_{r_a}} e^{6u_{a,3}}dx\bigg| < \frac{\ve}{2}.
\end{equation}
Finally, the last bound in \eqref{ra} and \eqref{stima9} imply at once
$$\bigg|\int_{\R{6}\setminus B_{r_a}} e^{6u_{a',3}}dx -\int_{\R{6}\setminus B_{r_a}} e^{6u_{a,3}}dx\bigg| < \frac{\ve}{2},$$
which together with \eqref{stima10} completes the proof.
\end{proof}

\medskip
\noindent\emph{Proof of Theorem \ref{trm0} (completed).}
Set $V^*=V(a^*)$, where $a^*$ is given by Lemma \ref{cont}. By Lemma \ref{cont}, Theorem \ref{trm1} and the intermediate value theorem, for every $V \ge V^*$ there exists $a\le a^*$ such that
$$\frac{1}{5!}\int_{\R{6}}e^{6u_{a,3}} dx=V,$$
hence the metric $g_{u_{a,3}}= e^{2u_{a,3}}|dx|^2$ has constant $Q$-curvature equal to $1$ and $\vol(g_{u_{a,3}})=5! V$.
Applying the transformation 
$$u=u_{a,3}-\frac{1}{6}\log 5!$$
it follows at once that the metric $g_u=e^{2u}|dx|^2$ satisfies $\vol(g_u)=V$ and
$Q_{g_u}\equiv  5!$, hence $u$ solves \eqref{eq0}-\eqref{area}.
\hfill $\square$

\section{Proof of Theorems \ref{trm3} and \ref{trm4}}

When $f:\R{n}\to \R{}$ is radially symmetric we have $\Delta f(x)=f''(|x|)+\frac{n-1}{|x|}f'(|x|)$. In particular we have
\begin{equation}\label{lapl}
\Delta^m r^{2m}=2^{2m}m(2m-1)!\quad \text{in }\R{2m}.
\end{equation}
For $m\ge 2$ and $b\le 0$ let $u_b$ solve
$$
\left\{
\begin{array}{ll}
\Delta^{m} u_b= -(2m-1)!e^{2mu_b}& \text{in }\R{2m}\\
u_b^{(j)}(0)=0 &\text{for } 0\leq j\leq 2m-1,\;j\ne 2m-2\\
u_b^{(2m-2)}=b.&
\end{array}
\right.
$$
From \eqref{div1}-\eqref{div2} it follows that $u_0\leq 0$, hence $\Delta^m u_0\geq -(2m-1)!$. We claim that
$$u_0(r)\ge \psi(r):=-\frac{r^{2m}}{2^{2m}m}.$$
Indeed according to \eqref{lapl} $\psi$ solves
$$
\Delta^{m} \psi= -(2m-1)!\le \Delta^m u_0 \quad  \text{in }\R{2m}
$$
and
$$\psi^{(j)}(0)=0 = u_0^{(j)}(0)\quad\text{for }0\le j\le 2m-1,$$
which implies
$$\Delta^{j}\psi(0)=0=\Delta^{j}u_0(0)\quad \text{for }0\le j\le m-1,$$
see \cite[Lemma 17]{mar1}. Then the claim follows from \eqref{div1}-\eqref{div2} and a simple induction.

Now integrating we get
\begin{equation*}
\begin{split}
\int_{\R{2m}}e^{2mu_0}dx&\geq \int_{\R{2m}}e^{2m\psi}dx =\omega_{2m-1}\int_0^\infty r^{2m-1}\exp\Big(-\frac{r^{2m}}{2^{2m-1}}\Big)dr =\frac{2^{2m-2}\omega_{2m-1}}{m}=:V_m.
\end{split}
\end{equation*}
Using the formulas
$$\omega_{2m-1}=\vol(S^{2m-1})=\frac{2\pi^m}{(m-1)!},\quad \omega_{2m}=\vol(S^{2m})=\frac{2^{2m}(m-1)!\pi^m}{(2m-1)!},\quad m\ge 1$$
we verify
\begin{equation}\label{Vm0}
V_m= \frac{(2m)!}{4(m!)^2}\omega_{2m},\quad \frac{V_2}{\omega_4}=\frac{3}{2}>1, \quad \frac{V_{m+1}}{\omega_{2m+2}}\Big(\frac{V_m}{\omega_{2m}}\Big)^{-1}=\frac{(2m+2)(2m+1)}{(m+1)^2}>1,
\end{equation}
hence by induction
\begin{equation}\label{Vm}
V_m>\vol(S^{2m})\quad \text{for }m\ge 2.
\end{equation}

\medskip

With the same argument used to prove Lemma \ref{cont} we can show that the function
$$V(b):=\int_{\R{2m}}e^{6u_b}dx,\quad b\in (-\infty,0]$$
is finite and continuous. Indeed it is enough to replace \eqref{ra0} with
$$\Delta^{m-1}u_b(r)\le B<0\quad \text{for }r\ge r_b, $$
and \eqref{ra} with
$$(\Delta^{m-1-j}u_b)'(r)<C_{m,j}Br^{2j-1},\quad  \Delta^{m-1-j}u_b(r)<D_{m,j}Br^{2j},\quad \text{for }r\ge r_b, \; 1\le j\le m-1$$
where $r_b$ is chosen large enough and
$$C_{m,1}=\frac{1}{2m},\quad D_{m,j}=\frac{C_{m,j}}{2j},\quad C_{m,j+1}=\frac{D_{m,j}}{2m+2j},$$
whence
$$C_{m,j}=\frac{(m-1)!}{2^{2j-1}(j-1)!(m+j-1)!},\quad D_{m,j}=\frac{(m-1)!}{2^{2j}j!(m+j-1)!}.$$
Moreover, using that $\Delta^{m-1}u_b(0)=C_m b$ for some constant $C_m>0$,  $\Delta^m u_b(r)\le 0$ for $r\ge 0$ and \eqref{div1}-\eqref{div2} as before,
we easily obtain
\begin{equation}\label{ub}
u_b(r)\le E_m br^{2m-2},
\end{equation}
where $E_m:=C_m C_{m,m-1}>0$, hence
$$\lim_{b\to -\infty}V(b)\le \lim_{b\to -\infty}\int_{\R{6}}e^{6E_m b |x|^{2m-2}}dx=0.$$
By continuity we conclude that for every $V\in (0,V_m]$ there exists $b\le 0$ such that $u=u_b$ solves \eqref{eq0}-\eqref{area} if $m$ is odd or \eqref{eq0-}-\eqref{area} if $m$ is even. Taking \eqref{Vm} into account it only remains to prove that the solutions $u_b$ corresponding to $V=\vol(S^{2m})$ is not a spherical one. This follows immediately from \eqref{ub}, which is not compatible with \eqref{stand}.
\hfill $\square$

\section{Applications and open questions}

\paragraph{Possible gap phenomenon}

Theorems \ref{trm0} and \ref{trm3} guarantee that for $m=3$  there exists a solution to \eqref{eq0}-\eqref{area} for every $V\in (0,V_3]\cup [V^*,\infty)$, with possibly $V_3<V^*$.
Could it be that for some $V\in (V_3,V^*)$ Problem \eqref{eq0}-\eqref{area} admits no solution?

If we restrict to rotationally symmetric solutions, some heuristic arguments show that the volume of a solution to \eqref{ODE}, i.e. the function
$$V(a,b):=\int_{\R{6}} e^{6u_{a,b}(|x|)}dx$$
need not be continuous for all $(a,b)\in \R{2}$, hence the image of the function $V$ might not be connected.

\medskip

\paragraph{Higher dimensions and negative curvature}

It is natural to ask whether Theorems \ref{trm0} and \ref{trm1} generalize to the case $m>3$ or whether an analogous statement holds when $m\ge 2$ and \eqref{eq0-} is considered instead of \eqref{eq0}. Since the sign on the right-hand side of the ODE \eqref{ODE} plays a crucial role, we would expect that part of the proof of Theorem \ref{trm1} can be recycled for $\eqref{eq0}$ when $m\ge 5$ is odd, or for \eqref{eq0-} when $m$ is even.

For instance let $u_a=u_a(r)$ be the solution in $\R{4}$ of
$$
\left\{
\begin{array}{l}
\Delta^2 u_a= -6e^{4u_a}\\
u_a(0)=u'_a(0)=u'''_a(0)=0\\
u''_a(0)=a.
\end{array}
\right.
$$
It should not be difficult to see that $u_a(r)$ exists for all $r\ge 0$ and that $\int_{\R{4}}e^{4u_a(|x|)}dx<\infty$.
Do we also have
$$\lim_{a\to+\infty}\int_{\R{4}}e^{4u_a(|x|)}dx=\infty?$$

\paragraph{Non-radial solutions}

The proof of Theorem $C$ cannot be extended to provide non-radial solutions to \eqref{eq0}-\eqref{area} for $m\ge 3$ and  $V\ge \vol(S^{2m})$, but it is natural to conjecture that they do exist.

\paragraph{Concentration phenomena}

The classification results of the solutions to \eqref{eq0}-\eqref{area}, \cite{CL}, \cite{Lin}, \cite{Xu} and \cite{mar1},  have been used to understand the asymptotic behavior of unbounded sequences of solutions to the prescribed Gaussian curvature problem on $2$-dimensional domains (see e.g. \cite{BM} and \cite{LS}), on $S^2$ (see \cite{str1}) and to the prescribed $Q$-curvature equation in dimension $2m$ (see e.g. \cite{DR}, \cite{mal}, \cite{MS}, \cite{ndi}, \cite{rob1}, \cite{rob2}, \cite{mar3}, \cite{mar4}).

For instance consider the following model problem.
Let $\Omega\subset\R{2m}$ be a connected open set and consider a sequence $(u_k)$ of solutions to the equation

\begin{equation}\label{eq1}
(-\Delta)^m u_k =Q_k e^{2mu_k}\quad \text{in }\Omega,
\end{equation}
where
\begin{equation}\label{Vk}
Q_k\to Q_0 \quad \text{in } C^1_{\loc}(\Omega),\quad \limsup_{k\to\infty}\int_{\Omega} e^{2mu_k}dx <\infty,
\end{equation}
with the following interpretation: $g_k:=e^{2u_k}|dx|^2$ is a sequence of conformal metrics on $\Omega$ with $Q$-curvatures $Q_{g_k}=Q_k$ and equibounded volumes.

As shown in \cite{ARS} unbounded sequences of solutions to \eqref{eq1}-\eqref{Vk} can exhibit pathological behaviors in dimension $4$ (and higher), contrary to the elegant results of \cite{BM} and \cite{LS} in dimension $2$. This is partly due to Theorem A. In fact for $m\ge 2$ and $\alpha\in (0,(2m-1)!\vol(S^{2m})]$ one can found a sequence $(u_k)$ of solutions to \eqref{eq1}-\eqref{Vk} with $Q_0>0$ and
\begin{equation}\label{alpha}
\lim_{R\to 0} \lim_{k\to\infty} \int_{B_{R}(x_0)}|Q_k| e^{2mu_k}dx=\alpha\quad \text{for some }x_0\in\Omega.
\end{equation}
For $m=2$ this was made very precise by F. Robert \cite{rob1} in the radially symmetric case. In higher dimension or when $Q_0$ is not necessarily positive, thanks to Theorems \ref{trm0}-\ref{trm4} we see that $\alpha$ can take values larger than $(2m-1)!\vol(S^{2m})$. Indeed if $u$ is a solution to \eqref{eq0}-\eqref{area} or \eqref{eq0-}-\eqref{area}, then $u_k:=u(kx)+\log k$ satisfies \eqref{eq1}-\eqref{Vk} with $\Omega=\R{2m}$, $Q_k\equiv \pm (2m-1)!$ and
$$|Q_k| e^{2mu_k}dx \rightharpoondown (2m-1)! V \delta_0,\quad \text{weakly as measures}.$$
When $m=2$, $Q_0>0$ (say $Q_0\equiv 6$) it is unclear whether one could have concentration points carrying more $Q$-curvature than $6\vol(S^4)$, i.e. whether one can take $\alpha>6\vol(S^4)$ in \eqref{alpha}. 
Theorem B suggests that if  the answer is affirmative, this should be due to the convergence to the same blow-up point of two or more blow-ups. Such a phenomenon is unknown in dimension $4$ and higher, but was shown in dimension $2$ by  Wang \cite{wan} with a technique which, based on the abundance of conformal transformations of $\mathbb{C}$ into itself, does not extend to higher dimensions.


\end{document}